\let\origsection=\section 
\def\section{\@ifstar{\origsection*}{\mysection}}
\def\mysection{\@startsection{section}{1}\z@{.7\linespacing\@plus\linespacing}{.5\linespacing}{\normalfont\scshape\centering\S}}
\renewcommand{\PrintDOI}[1]{\doi{#1}}
\let\polishlcross=\l
\def\l{\ifmmode\ell\else\polishlcross\fi}
\renewcommand{\emptyset}{\varnothing}
\renewcommand{\setminus}{\smallsetminus}
\def\moverlay{\mathpalette\mov@rlay}
\def\mov@rlay#1#2{\leavevmode\vtop{%
   \baselineskip\z@skip \lineskiplimit-\maxdimen
   \ialign{\hfil$\m@th#1##$\hfil\cr#2\crcr}}}
\newcommand{\charfusion}[3][\mathord]{
    #1{\ifx#1\mathop\vphantom{#2}\fi
        \mathpalette\mov@rlay{#2\cr#3}
      }
    \ifx#1\mathop\expandafter\displaylimits\fi}
\newcommand{\sr}{\hat{r}}
\newcommand{\eps}{\varepsilon}
\newtheoremstyle{case}{}{}{\normalfont}{}{\itshape}{:}{ }{}
\newtheorem{thm}[equation]{Theorem}
\newtheorem{lem}[equation]{Lemma}
\newtheorem{conj}[equation]{Conjecture}
\newtheorem{cor}[equation]{Corollary}
\newtheorem{claim}[equation]{Claim}
\newtheorem{ques}[equation]{Question}
\newtheorem{fact}[equation]{Fact}
\theoremstyle{definition}
\newtheorem{defn}[equation]{Definition}
\newtheoremstyle{case}{}{}{\normalfont}{}{\itshape}{\normalfont:}{ }{}
\theoremstyle{case}
\numberwithin{equation}{section}
\newcommand{\cA}{\mathcal{A}}
\def\red{\text{\rm red}}
\def\blue{\text{\rm blue}}
\let\epsilon\varepsilon
\let\subset\subseteq
\def\({\left(}
\def\){\right)}
\def\[{\left[}
\def\]{\right]}
\def\llceil{\left\lceil}
\def\rrceil{\right\rceil}
\let\ot\leftarrow
\newcommand{\EE}{\mathbb{E}}
\newcommand{\NN}{\mathbb{N}}
\newcommand{\PP}{\mathbb{P}}
\newcommand*\patchAmsMathEnvironmentForLineno[1]{%
\expandafter\let\csname old#1\expandafter\endcsname\csname #1\endcsname
\expandafter\let\csname oldend#1\expandafter\endcsname\csname end#1\endcsname
\renewenvironment{#1}%
{\linenomath\csname old#1\endcsname}%
{\csname oldend#1\endcsname\endlinenomath}}%
\newcommand*\patchBothAmsMathEnvironmentsForLineno[1]{%
\patchAmsMathEnvironmentForLineno{#1}%
\patchAmsMathEnvironmentForLineno{#1*}}%
\begin{document}

\title{The size-Ramsey number of powers of paths}
\author[
Clemens
\and Jenssen
\and Kohayakawa
\and Morrison
\and Mota
\and Reding
\and Roberts
]
{
Dennis Clemens
\and Matthew Jenssen
\and Yoshiharu Kohayakawa
\and Natasha Morrison
\and Guilherme Oliveira Mota
\and Damian Reding
\and Barnaby Roberts
}

\shortdate
\yyyymmdddate
\settimeformat{ampmtime}
\date{\today, \currenttime}

\address{Technische Universit\"at Hamburg, Institut f\"ur Mathematik, Hamburg, Germany}
\email{\{dennis.clemens|damian.reding\}@tuhh.de}

\address{Department of Mathematics, London School of Economics, London, United Kingdom}
\email{\{m.o.jenssen|b.j.roberts\}@lse.ac.uk}

\address{Instituto de Matem\'atica e Estat\'{\i}stica, Universidade de
	S\~ao Paulo, S\~ao Paulo, Brazil}
\email{yoshi@ime.usp.br}

\address{Mathematical Institute, University of Oxford, Oxford, United Kingdom}
\email{morrison@maths.ox.ac.uk}

\address{Centro de Matem\'atica, Computa\c c\~ao e Cogni\c c\~ao, Universidade Federal do ABC, Santo Andr\'e, Brazil}
\email{g.mota@ufabc.edu.br}

\thanks{%
  The third author was partially supported by FAPESP
  (Proc.~2013/03447-6, 2013/07699-0), by CNPq (Proc.~459335/2014-6,
  310974/2013-5) and by Project MaCLinC/USP.  The fifth author was
  supported by FAPESP (Proc.~2013/11431-2, Proc.~2013/03447-6) and
  partially by CNPq (Proc.~459335/2014-6). The collaboration of part
  of the authors was supported by a CAPES/DAAD PROBRAL grant
  (Proc.~430/15).
}

\begin{abstract}
  Given graphs $G$ and $H$ and a positive integer $q$ say that $G$
  \emph{is $q$-Ramsey for} $H$, denoted
    $G\rightarrow (H)_q$, if every $q$-colouring of the edges of
  $G$ contains a monochromatic copy of $H$. The \emph{size-Ramsey number} $\sr(H)$ of
  a graph $H$ is defined to be
  $\sr(H)=\min\{|E(G)|\colon G\rightarrow (H)_2\}$. Answering a
  question of Conlon, we prove that, for every fixed~$k$, we have
  $\sr(P_n^k)=O(n)$, where~$P_n^k$ is the $k$th power of the
  $n$-vertex path $P_n$ (i.e.~, the graph with vertex set $V(P_n)$ and
  all edges $\{u,v\}$ such that the distance between $u$ and $v$ in
  $P_n$ is at most $k$). Our proof is probabilistic, but can also be made constructive. 
\end{abstract}

\maketitle

\section{Introduction}
\label{sec:intro}

Given graphs $G$ and $H$ and a positive integer $q$ say that $G$
\emph{is $q$-Ramsey for} $H$, denoted $G\rightarrow (H)_q$, if every
$q$-colouring of the edges of $G$ contains a monochromatic copy of
$H$.  When $q = 2$, we simply write $G\rightarrow H$.  In its simplest
form, the classical theorem of Ramsey \cite{Ra} states that for any
$H$ there exists an integer $N$ such that $K_N \rightarrow H$. The
\emph{Ramsey number}~$r(H)$ of a graph~$H$ is defined to be the
smallest such~$N$. Ramsey problems have been well studied and many
beautiful techniques have been developed to estimate Ramsey
numbers. For a detailed summary of developments in Ramsey theory, see the excellent survey of Conlon, Fox and Sudakov~\cite{CoFoSu15}.

A number of variants of the classical Ramsey problem are also under active study. In
particular, Erd\H{o}s, Faudree, Rousseau and Schelp~\cite{ErFaRoSc78}
proposed the problem of determining the smallest number of edges in a
graph $G$ such that $G\rightarrow H$. Define the
\emph{size-Ramsey number} $\sr(H)$ of a graph $H$ to be
$$\sr(H):=\min\{|E(G)|\colon G\rightarrow H\}.$$
In this paper, we are concerned with finding bounds on $\sr(H)$ in some specific cases.

For any graph $H$ it is not difficult to see that
$\sr(H)\leq \binom{r(H)}{2}$.  A result due to Chv\'atal (see,
e.g.,~\cite{ErFaRoSc78}) shows that in fact this bound is tight for complete graphs. For the $n$-vertex path $P_n$, Erd\H{o}s~\cite{Er81} asked the following question.
\begin{ques}
Is it true that
\begin{equation*}
\lim_{n\to\infty}\frac{\sr(P_n)}{n} = \infty \hspace{0.4cm} and \hspace{0.4cm}
\lim_{n\to\infty}\frac{\sr(P_n)}{n^2} = 0?
\end{equation*}
\end{ques}
Using a probabilistic construction, Beck~\cite{Be83} proved that the
size-Ramsey number of paths is linear, i.e., $\sr(P_n)=O(n)$.  Alon
and Chung~\cite{AlCh88} provided an explicit construction of a graph
$G$ with $O(n)$ edges such that $G\rightarrow P_n$.  Recently, Dudek
and Pra{\l}at~\cite{DuPr15} gave a simple alternative proof for this
result (see also~\cite{letzter16:_path_ramsey}).  More generally,
Friedman and Pippenger~\cite{FrPi87} proved that the size-Ramsey
number of bounded degree trees is linear (see
also~\cite{De12,HaKo95,Ke93}) and it is shown in~\cite{HaKoLu95} that
cycles also have linear size-Ramsey numbers.

A question posed by Beck~\cite{Be90} asked
whether $\sr(G)$ is linear for all graphs $G$ with bounded maximum
degree. This was negatively answered by R\"odl and Szemer\'edi, who showed that there exists an $n$ vertex graph $H$
and maximum degree~$3$ such that
$\sr(H)=\Omega(n\log ^{1/60}n)$.  The current best upper bound for 
bounded degree graphs is proved in~\cite{KoRoScSz11}, where it is
shown that for every $\Delta$ there is a constant~$c$ such that for
any graph $H$ with $n$ vertices and maximum degree~$\Delta$:
\begin{equation*}
  \sr(H)\leq cn^{2-1/\Delta}\log^{1/\Delta}n.
\end{equation*}
For further results on size-Ramsey numbers the reader is referred
to~\cite{%
  ben-eliezer12:_Ramsey,
  kohayakawa07+:_ramsey,
  reimer02:_ramsey
}. 

Given an $n$-vertex graph $H$ and an integer $k\geq 2$, the $k$th
power~$H^k$ of $H$ is the graph with vertex set $V(H)$ and all edges
$\{u,v\}$ such that the distance between $u$ and $v$ in $H$ is at most
$k$.  Answering a question of Conlon~\cite{Co} we prove that all
powers of paths have linear size-Ramsey numbers.  The following theorem
is our main result.

\begin{thm}\label{thm:main}
  For any integer $k\geq 2$,
  \begin{equation}
    \label{eq:thm_main}
    \sr(P_n^k)=O(n).
  \end{equation}
\end{thm}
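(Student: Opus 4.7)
The plan is to construct a random graph $G$ with $O(n)$ edges and show that with positive probability $G \to (P_n^k)_2$. I would take $G$ as a sparse random blow-up of a path-like base graph: fix large constants $T = T(k)$ and $L = L(k)$, and a constant $p = p(k) \in (0,1)$. Partition $V(G)$ into $M = \lceil C n / T \rceil$ blocks $V_1, \ldots, V_M$ of size $T$ (with $C = C(k)$ a constant), and for each pair $(V_i, V_j)$ with $0 < |i-j| \leq L$ include each potential edge independently with probability $p$; include intra-block edges with probability $p$ as well. Then in expectation $|E(G)| = \Theta(M L T^2 p) = \Theta(n)$, and concentration yields $|E(G)| = O(n)$ with high probability.

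Fix any $2$-coloring $\chi$ of $E(G)$. I would first derive a $2$-coloring of an auxiliary \emph{super-graph} $H$ on $[M]$ by assigning to each super-edge $ij$ (with $|i - j| \leq L$) the majority color of $\chi$ on $G[V_i, V_j]$, and to each super-vertex $i$ the majority color of $\chi$ on $G[V_i]$. The goal is to find a long monochromatic \emph{super-path} $i_1 i_2 \cdots i_s$ in $H$ with $s \geq n/T$, in which both the super-vertex colors and the connecting super-edge colors agree, say all red.

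Given such a super-path, $P_n^k$ is embedded into $V_{i_1} \cup \cdots \cup V_{i_s}$ using only red edges: the $j$-th block of $T$ consecutive vertices of $P_n^k$ is placed inside $V_{i_j}$. Within $V_{i_j}$, the red subgraph is a random graph on $T$ vertices of density at least $p/2$, and for $T$ large enough (compared to $k$) it contains the desired copy of $P_T^k$, a graph of bandwidth $k$ and maximum degree $2k$. Between $V_{i_j}$ and $V_{i_{j+1}}$, the crossing edges of $P_n^k$ form a staircase pattern contained in $K_{k,k}$, which is absorbed by the dense red bipartite subgraph $G[V_{i_j}, V_{i_{j+1}}]$ via a greedy, blow-up-lemma-type embedding. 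The bounded bandwidth of $P_n^k$ ensures that the intra- and inter-block embeddings can be glued together consistently, provided the last $k$ vertices placed inside $V_{i_j}$ and the first $k$ placed inside $V_{i_{j+1}}$ are joined by the required red staircase.

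The main obstacle is finding the long monochromatic super-path with coherent colors. I would adapt the Dudek--Pra{\l}at argument~\cite{DuPr15} for long monochromatic paths in random graphs, but applied to the super-graph $H$ and with the additional requirement of coupled vertex and edge colors. A natural route: first apply pigeonhole to restrict to a linear-sized subset $S \subseteq [M]$ of super-vertices of a single color, then use the random (expander-like) structure of $H[S]$ to find a long path along which a linear majority of super-edges are red. Making this rigorous requires a careful coupling between the randomness of $G$ and the arbitrariness of $\chi$ (ideally giving a bound holding uniformly over all $2$-colorings via a deletion/sprinkling argument), which is the technical crux of the proof.
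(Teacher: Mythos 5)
There is a genuine gap at the heart of the proposal, and it is the choice of base (super-)graph. Your super-graph $H$ on $[M]$ has $ij$ an edge precisely when $0<|i-j|\le L$; that is, $H\cong P_M^L$, a power of a path. That graph is not expander-like and has terrible connectivity relative to its size, and the adversary can exploit this. Concretely, colour all edges of $G$ inside $V_i$ and between $V_i,V_j$ red whenever $\lceil i/(2L)\rceil$ and $\lceil j/(2L)\rceil$ are both odd, and blue otherwise. Then the red super-vertices lie in disjoint intervals of $2L$ consecutive indices separated by blue intervals of $2L$ indices, and since super-edges only reach distance $\le L$, no red super-path can jump such a gap. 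Hence every red super-path has at most $2L$ vertices (and similarly for blue), a constant, while you need $s\ge n/T=\Theta(n)$. In particular $H[S]$ in your final paragraph is a disjoint union of constant-sized pieces, not ``random (expander-like)'': the randomness in your construction lives entirely inside the bipartite graphs $G[V_i,V_j]$, not in the incidence structure of the blocks, so there is nothing for a Dudek--Pra{\l}at DFS argument to grab onto. This is precisely why the paper takes $H$ to be a sparse pseudo-random graph with the two-sided edge property of Lemma~\ref{lem:Hexists}\eqref{item-1} (any two linear sets span an edge), which a power of a path completely fails.

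Two further points, though secondary to the above. First, the paper uses \emph{complete} bipartite graphs between adjacent clusters (a complete blow-up), whereas you use sparse random ones of density $p$; this costs you a genuine embedding difficulty that the paper sidesteps, since a ``red majority'' in a sparse random bipartite graph can still leave many vertices with no red neighbours. Second, majority colouring of super-edges is a weaker reduction than the one actually used: the paper colours a super-edge blue if the bipartite graph between the chosen monochromatic cliques $B(u),B(v)$ contains a blue $K_{s,s}$, which then combines cleanly with K\H{o}v\'ari--S\'os--Tur\'an and the Local Lemma to lift a red super-structure back to a red $P_n^k$. Also note that the paper does not look for a monochromatic super-path of a single colour at all: via Pokrovskiy's theorem it finds either a blue $P_n$ in the auxiliary $F^k$, or a red complete $(k+1)$-partite structure, and the red case is handled by finding an appropriately coloured ordinary $P_n$ in $F$ (not $F^k$) via a DFS on the expander $H$. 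Your outline does not contain an analogue of this red/blue dichotomy, which is what makes the argument close.
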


Since $C_n^k\subset P_n^{2k}$, the next corollary follows directly from Theorem~\ref{thm:main}.

\begin{cor}
For any integer $k\geq 2$,
\begin{equation}
  \label{eq:cor_cycle}
  \sr(C_n^k)=O(n).
\end{equation}
\end{cor}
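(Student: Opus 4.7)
The plan is to deduce the corollary directly from Theorem~\ref{thm:main} together with the subgraph inclusion $C_n^k \subseteq P_n^{2k}$ mentioned in the text, using the trivial monotonicity of the size-Ramsey number: if $H' \subseteq H$ then $\sr(H') \le \sr(H)$, since any $G$ satisfying $G \to H$ also satisfies $G \to H'$.

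The only step requiring an argument is the inclusion itself. I would construct an explicit bijection $\phi\colon V(C_n) \to V(P_n)$ sending the canonical $n$-cycle of $C_n$ to a Hamilton cycle of $P_n^2$. A convenient choice is the zigzag order $0, 2, 4, \ldots, n-2, n-1, n-3, \ldots, 3, 1$ (with the obvious adjustment when $n$ is odd), whose consecutive entries differ by at most $2$ in the standard labeling of $P_n$, so that the image is indeed a Hamilton cycle of $P_n^2$. By the triangle inequality, any two vertices at cyclic distance at most $k$ in $C_n$ are then at path distance at most $2k$ in $P_n$, and hence $\phi$ maps every edge of $C_n^k$ to an edge of $P_n^{2k}$. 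This gives $C_n^k \subseteq P_n^{2k}$.

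Combining the two ingredients, applying Theorem~\ref{thm:main} with $k$ replaced by $2k$ yields a graph $G$ with $|E(G)| = O(n)$ such that $G \to P_n^{2k}$. Under any two-colouring of $E(G)$ the resulting monochromatic copy of $P_n^{2k}$ contains a monochromatic copy of $C_n^k$, so $G \to C_n^k$ and therefore $\sr(C_n^k) \le |E(G)| = O(n)$. There is no real obstacle beyond Theorem~\ref{thm:main} itself; the sole verification is the elementary zigzag embedding of $C_n$ into $P_n^2$, which is why the authors flag the corollary as immediate.
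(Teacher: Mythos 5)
Your proposal is correct and matches the paper's argument exactly: the authors likewise deduce the corollary from Theorem~\ref{thm:main} via the inclusion $C_n^k\subset P_n^{2k}$ and monotonicity of $\sr$. The paper states the inclusion without proof; your zigzag embedding of $C_n$ into $P_n^2$ is a valid justification of it.
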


Throughout the paper we use big~$O$ notation with
respect to~$n\to\infty$, where the implicit constants may depend
on other parameters.  For a path $P$, we write~$|P|$ for the
number of vertices in~$P$.  For simplicity, we omit floor
and ceiling signs when they are not essential.

The paper is structured as follows. In Section~\ref{outline} we introduce some preliminary definitions and give an outline of the proof is given. The proof of Theorem \ref{thm:main} is given in
Section~\ref{sec:pf_main_thm}. In Section~\ref{sec:open}, we mention
some related open problems.  

\section{Outline of the proof}\label{outline}
 
To prove Theorem~\ref{thm:main}, we will show that there exists a graph
$G$ with $O(n)$ edges such that~$G\to P_n^k$.  

To construct $G$ we begin by taking a pseudo-random graph $H$ with bounded degree. The existence of such an $H$ will be proved in Lemma~\ref{lem:Hexists}. Given $H^k$, we then take a \emph{complete blow-up}, defined as follows.

\begin{defn}\label{def:blow}
  Given a graph $H$ and a positive integers $t$, the \emph{complete-t-blow-up of} $H$, denoted $H_t$ is the graph obtained by replacing each vertex $v$ of $H$ by a complete graph with $r(K_t)$ vertices, the
  \emph{cluster} $C(v)$, and by adding, for every $\{u,v\}\in E(H)$,
  every edge between $C(u)$ and $C(v)$.
\end{defn}

Note that we replace each vertex with a clique on $r(K_t)$ vertices rather than $t$ vertices as might have been expected.

We will now see that complete blow-ups of powers of
bounded degree graphs have a linear number of edges.  This makes them valid candidates for showing $\sr(P_n^k)=O(n)$.

\begin{fact}\label{fact:linear}
  Let $k$, $t$, $a$ and $b$ be positive constants.  If $H$ is a graph
  with $|V(H)|=an$ and $\Delta (H)\leq b$, then $|E(H^k_t)|=O(n)$.
\end{fact}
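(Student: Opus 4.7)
The plan is to bound the number of edges of $H^k_t$ by a direct two-step count: first control the number of edges in $H^k$ using the degree bound on $H$, and then convert this to an edge count for the blow-up using the fact that each cluster has constant size $r(K_t)$.

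First I would observe that the maximum degree of $H^k$ is bounded by a constant depending only on $b$ and $k$. Indeed, since $\Delta(H)\leq b$, a BFS from any vertex $v$ of $H$ reaches at most
\[
1+b+b(b-1)+\cdots+b(b-1)^{k-1}\leq kb^k
\]
vertices within distance $k$, so $\Delta(H^k)\leq kb^k=:D$. Consequently
\[
|E(H^k)|\leq \tfrac12\,|V(H)|\,\Delta(H^k)\leq \tfrac12\,an\,D=O(n).
\]

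Next I would write $N:=r(K_t)$ for the size of each cluster (a constant, since $t$ is constant), and split $E(H^k_t)$ into intra-cluster and inter-cluster edges. By Definition~\ref{def:blow} each of the $an$ clusters contributes $\binom{N}{2}$ intra-cluster edges, and each edge $\{u,v\}\in E(H^k)$ contributes exactly $N^2$ inter-cluster edges (all pairs between $C(u)$ and $C(v)$). Therefore
\[
|E(H^k_t)|=an\binom{N}{2}+|E(H^k)|\cdot N^2\leq an\binom{N}{2}+\tfrac12\,an\,D\,N^2=O(n),
\]
where the implicit constant depends on $a$, $b$, $k$ and $t$ but not on $n$, as required.

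There is no serious obstacle here; the only point worth flagging is that $r(K_t)$, although possibly very large, is a constant once $t$ is fixed, so it is absorbed into the implicit constant in the $O(n)$ bound. This is exactly why the definition of the blow-up uses cluster size $r(K_t)$ rather than $t$: it costs only a multiplicative constant in the edge count, while (as will be used later in the paper) it provides enough room to embed monochromatic cliques of size $t$ in each cluster.
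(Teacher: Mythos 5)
Your proof is correct and follows essentially the same route as the paper: bound $\Delta(H^k)$ (hence $|E(H^k)|$) by a constant times $n$ using $\Delta(H)\leq b$, then count intra-cluster and inter-cluster edges of the blow-up, with the constant cluster size $r(K_t)$ absorbed into the implicit constant. Your write-up is simply a more detailed version of the paper's two-line argument.
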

\begin{proof}
  Since $\Delta(H) \leq b$, we have $|E(H^k)|=O(n)$. Therefore,
  $|E(H^k_t)|\le r(K_t)^2\cdot|E(H^k)| + r(K_t)^2 \cdot an = O(n)$.
\end{proof}

The heart of the proof is to show that, given any 2-colouring of the edges of $H_t^k$, we can find a monochromatic copy of $P_n$. To do this we will use the fact that $H$ satisfies a particular property (Lemma~\ref{lem:H2}). We shall also make use of the following result.

\begin{thm}[{Pokrovskiy~\cite[{Theorem~1.7}]{Po16}}]
  \label{thm:alexey}
  Let $k \ge 1$. Suppose that the edges of $K_n$ are coloured with red
  and blue.  Then $K_n$ can be covered by $k$ vertex-disjoint blue
  paths and a vertex-disjoint red balanced complete $(k+1)$-partite graph.
\end{thm}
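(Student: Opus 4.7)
I would attempt this by an extremal swapping argument. Call a tuple $(P_1,\dots,P_k,A_1,\dots,A_{k+1})$ a \emph{valid configuration} if the sets $V(P_1),\dots,V(P_k),A_1,\dots,A_{k+1}$ partition $V(K_n)$ (any of them may be empty), each $P_i$ spans a blue path, and every edge between distinct parts $A_i$ and $A_j$ is red. At least one valid configuration exists (take $A_1=V(K_n)$ and everything else empty). Among valid configurations, I would first minimise $\sum_i|V(P_i)|$ and then, subject to this, maximise $\min_i|A_i|$.

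\textbf{Key claim.} In such an extremal configuration the $|A_i|$ differ pairwise by at most a constant $C=C(k)$. To prove it, suppose for contradiction that $|A_1|\ge|A_{k+1}|+2$ (after relabelling). An easy swap settles the first subcase: if some $v\in A_1$ has all of its edges to $A_1\setminus\{v\}$ red, then moving $v$ from $A_1$ to $A_{k+1}$ preserves validity (all remaining cross-part edges involving $v$ were already red) while strictly increasing $\min_i|A_i|$, a contradiction. Otherwise every $v\in A_1$ has a blue neighbour inside $A_1$, so the blue graph on $A_1$ has minimum degree at least $1$; in particular $A_1$ contains a blue edge $uv$. A P\'osa-style rotation/absorption argument applied to $uv$ (attaching $u$ or $v$ to an endpoint of some $P_i$ and, if needed, ejecting an interior vertex of that path into a smaller part) should yield a valid configuration either with strictly smaller $\sum_i|V(P_i)|$ or with the same $\sum_i|V(P_i)|$ but strictly larger $\min_i|A_i|$, again contradicting extremality. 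Once the claim is in place, a bounded amount of clean-up at the end (moving $O_k(1)$ vertices from oversized parts into the paths and reattaching them via blue endpoints or as very short new path segments) produces the required balanced decomposition.

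\textbf{Main obstacle.} The subtle step is the rotation/absorption argument: relocating a vertex into some $A_j$ is permitted only when \emph{all} its edges to the other parts are red, and rotating a blue path can inadvertently create a blue cross-edge that destroys this property. A careful case analysis, indexed by the colour pattern of the edges from $\{u,v\}$ to the endpoints of $P_1,\dots,P_k$ and to the parts $A_2,\dots,A_{k+1}$, is required, and showing that in every such pattern some rearrangement strictly improves the extremal parameter is where I expect the main effort of the proof to lie.
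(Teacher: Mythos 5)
First, note that the paper does not prove this statement: it is quoted from Pokrovskiy~\cite{Po16} as Theorem~1.7, and the paper merely remarks that a weaker ``comparable cardinality'' version follows by iterating \cite[Lemma~1.5]{Po16}. So the comparison is between your blind attempt and Pokrovskiy's actual argument, not something in this paper.

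Your approach has a fundamental flaw located not in the rotation step you flag as the ``main obstacle'' but already in the choice of extremal objective. You minimise $\sum_i|V(P_i)|$ first, then maximise $\min_i|A_i|$ subject to that. But the configuration $A_1=V(K_n)$, all other $A_j$ and all $P_i$ empty, is valid for \emph{every} colouring and has $\sum_i|V(P_i)|=0$; hence your first-stage minimum is always $0$, and your extremal configurations always have empty paths. The theorem, however, genuinely needs the paths to carry vertices. Take the all-blue colouring of $K_n$: the only valid configurations with empty paths have at most one nonempty $A_i$ (two nonempty parts would have blue cross-edges), so $\min_i|A_i|=0$ is forced, $\max_i|A_i|=n$, and your Key Claim that the $|A_i|$ differ by $O_k(1)$ is false by a margin of $n$. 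No swap or rotation can repair this: $\sum_i|V(P_i)|$ cannot drop below $0$, and moving any vertex of $A_1$ into another part creates a blue cross-edge. Even in less extreme colourings the same pathology appears --- for $k=1$, colour $K_n$ so that a cut $(X,Y)$ with $|X|=n/3$, $|Y|=2n/3$ is all red and each side is internally all blue; then the extremal configuration (with your objective) is $A_1=X$, $A_2=Y$, $P_1=\emptyset$, which is unbalanced by $n/3$, and again no local move with empty paths is available. The correct conclusion of the theorem in the all-blue case is precisely the opposite of what your objective seeks: one blue Hamilton path (plus $k-1$ empty paths) and an \emph{empty} $(k+1)$-partite graph.

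In other words, the paths are there to \emph{absorb} the imbalance, so any extremal scheme must reward growing the paths when this helps balance; minimising path length as the primary objective prevents exactly the moves you need. Pokrovskiy's argument (for the $k=1$ base case, Lemma~1.5) builds the blue path and the red balanced bipartite graph simultaneously, maintaining balance as an invariant while the path grows, and then iterates inside one part to get the $(k+1)$-partite version. If you want to salvage an extremal formulation, you would at least need to reverse the lexicographic order (e.g.\ minimise $\max_i|A_i|-\min_i|A_i|$ first, or maximise $\min_i|A_i|$ without first pinning down the paths), and even then the ``clean-up'' passage from approximate to exact balance --- which you leave vague --- requires care, since relocating a vertex into an $A_j$ is only legal when \emph{all} of its cross-edges to the other parts are red, which a rotation of a blue path does nothing to guarantee.
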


We remark that we do not need the full strength of this result, in the sense
that we do not need the complete $(k+1)$-partite graph to be balanced;
it suffices for us to know that the vertex classes are of comparable
cardinality.  Such a result can be derived easily by iterating
Lemma~1.5 in~\cite{Po16}, for which Pokrovskiy gives a short and
elegant proof (see also~\cite[Lemma~1.10]{pokrovskiy14:_partit}).

We shall also use the classical K\H{o}v\'ari--T. S\'{o}s--Tur\'{a}n
theorem~\cite{KoSoTu54}, in the following simple form.

\begin{thm}
  \label{thm:kst}
  Let~$G$ be a balanced bipartite graph with~$t$ vertices in each
  vertex class.  If~$G$ contains no~$K_{s,s}$, then~$G$ has at
  most~$4t^{2-1/s}$ edges.
\end{thm}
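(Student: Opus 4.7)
The plan is to run the classical double-counting argument of K\H{o}v\'ari, S\'os, and Tur\'an. Label the two vertex classes of $G$ as $A$ and $B$, each of size $t$, write $e = |E(G)|$, and let $d(v)$ denote the degree of a vertex $v \in A$. I would count the number $N$ of pairs $(v, S)$ with $v \in A$, $S \subset B$, $|S| = s$, and $v$ adjacent in $G$ to every vertex of $S$.

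Counting first by $v$, one has $N = \sum_{v \in A} \binom{d(v)}{s}$. Since the function $x \mapsto \binom{x}{s}$ is convex and $\sum_{v \in A} d(v) = e$, Jensen's inequality gives
\[
  N \;\geq\; t \binom{e/t}{s}.
\]
Counting first by $S$: any $s$-subset $S$ of $B$ has at most $s-1$ common neighbours in $A$, since otherwise such a common neighbourhood together with $S$ would span a $K_{s,s}$. Hence
\[
  N \;\leq\; (s-1)\binom{t}{s}.
\]

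Combining these two bounds yields $t \binom{e/t}{s} \leq (s-1)\binom{t}{s}$. Using the standard estimates $\binom{x}{s} \geq (x-s+1)^s / s!$ (valid for $x \geq s-1$) and $\binom{t}{s} \leq t^s/s!$, this simplifies to
\[
  \bigl(e/t - s + 1\bigr)^s \;\leq\; (s-1)\,t^{\,s-1},
\]
from which one extracts
\[
  e \;\leq\; (s-1)^{1/s}\,t^{\,2-1/s} + (s-1)t.
\]

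The only remaining step is to absorb the lower-order term $(s-1)t$ and the constant $(s-1)^{1/s}$ into the uniform constant $4$. Since $s^{1/s} \leq e^{1/e} < 2$, the leading coefficient is at most $2$, and for $t$ at least a constant depending on $s$ one has $(s-1)t \leq t^{\,2-1/s}$, so $e \leq 3 t^{2-1/s} \leq 4 t^{2-1/s}$. The degenerate case $e/t < s - 1$ (where Jensen cannot be applied usefully) gives $e < (s-1)t \leq 4 t^{2-1/s}$ directly. I expect no real obstacle here: the entire argument is classical, and the only mild book-keeping is the constant-absorption step above.
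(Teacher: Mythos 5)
The paper does not prove this statement: it is quoted as ``the classical K\H{o}v\'ari--T.\ S\'os--Tur\'an theorem~\cite{KoSoTu54}, in the following simple form'' and used as a black box, so there is no internal proof to compare against. Your argument is the standard double-counting proof of that theorem and it is essentially correct. The only wrinkle is a small numerical slip in the absorption step: the claim that $(s-1)t\le t^{2-1/s}$ for all $t\ge s$ is not quite true (for instance $s=t=4$ gives $12>4^{7/4}\approx 11.3$). What is true, and suffices, is $(s-1)t\le 2t^{2-1/s}$ for $t\ge s$, since $t^{1-1/s}\ge s^{1-1/s}=s/s^{1/s}>s/2>(s-1)/2$ using $s^{1/s}\le e^{1/e}<2$; combined with $(s-1)^{1/s}<2$ this yields $e\le 2t^{2-1/s}+2t^{2-1/s}=4t^{2-1/s}$, as needed, while the case $t<s$ is trivial because then there is no $K_{s,s}$ and $e\le t^2\le 4t^{2-1/s}$ follows from $t^{1/s}<s^{1/s}<2<4$. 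So the bound with constant $4$ is correct, even though your intermediate claim of $3t^{2-1/s}$ does not quite hold for all admissible $s,t$.
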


Let us now give a brief outline of how we find our monochromatic copy of $P_n$. Suppose the edges of~$H_t^k$ have been coloured red and blue by a
colouring~$\chi$.  Recall that~$H_t^k$ is obtained by blowing
up~$H^k$; in particular, the vertices~$v$ of~$H^k$ become large
complete graphs~$C(v)$.  By the choice of parameters, Ramsey's theorem
tells us that each such~$C(v)$ contains a monochromatic~$K_t$.  We
suppose that at least half of the~$C(v)$ contain a blue~$K_t$ and
let~$F$ be the subgraph of~$H$ induced by the corresponding
vertices~$v$.

We shall define an auxiliary edge-colouring~$\chi'$ of~$F^k$ and use the fact that~$F^k\to P_n$.  If we
find a blue~$P_n$ in~$F^k$ with the colouring~$\chi'$, then we shall
be able to find a blue~$P_n^k$ in~$H_t^k$.  On the other hand, if no
such blue path~$P_n$ exists in~$F^k$, then we shall be able to find a
\textit{red~$P_n$ in~$F\subset H$} (not in~$F^k$), with certain
additional properties.  More precisely, such a
red~$P_n\subset F\subset H$ will be found as in
Lemma~\ref{lem:H2}, with the sets~$A_i$ being the vertex classes
of a red $(k+1)$-partite subgraph of~$F^k$ as given by
Theorem~\ref{thm:alexey}, applied to a suitable red/blue coloured
complete graph (we complete~$F^k$ with its auxiliary colouring~$\chi'$
to a red/blue coloured complete graph by considering non-edges
of~$F^k$ red).  It will then be easy to find a red~$P_n^k$ in~$H_t^k$.
The idea of defining an auxiliary graph on monochromatic cliques as above was used in \cite{AllBriSko}.

\section{Proof of Theorem~\ref{thm:main}}
\label{sec:pf_main_thm}

Our first lemma guarantees the existence of bounded degree graphs with the pseudo-randomness property we require.

\begin{lem}
  \label{lem:Hexists}
  For every integer $k\geq 1$ and every $\varepsilon>0$ there
  exists~$a_0$ such that the following holds.  For any~$a\geq a_0$
  there is a constant~$b$ such that, for any large enough~$n$, there
  is a graph $H$ with $v(H)=an$ such that:
  \begin{enumerate}
  \item  For every pair of disjoint sets $S$, $T\subset V(H)$ with $|S|,|T|\geq\varepsilon n$, we have $|E_H(S,T)|>0$\label{item-1}.
  \item $\Delta(H)\leq b$. \label{item-2}
  \end{enumerate}
\end{lem}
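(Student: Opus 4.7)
The plan is a standard probabilistic construction: take $H$ to be (a cleaned-up version of) the binomial random graph $G(N,d/N)$, where $N\approx 2an$ and $d$ is a large constant depending on $a$ and $\varepsilon$.

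More precisely, set $N = \lceil 2an \rceil$ and let $H_0 \sim G(N,d/N)$. I would first verify the expansion condition. For disjoint $S,T \subset [N]$ with $|S|,|T|\ge \varepsilon n$,
\[
\Pr[e_{H_0}(S,T)=0]
\le (1-d/N)^{|S||T|}
\le \exp\bigl(-d|S||T|/N\bigr)
\le \exp\bigl(-d\varepsilon^2 n/(2a)\bigr),
\]
and there are at most $4^{N}=16^{an}$ such pairs $(S,T)$, so union-bounding gives
\[
\Pr\bigl[\exists\,\text{disjoint }S,T\text{ with }|S|,|T|\ge\varepsilon n\text{ and }e_{H_0}(S,T)=0\bigr]
\le \exp\bigl(n(2a\log 4 - d\varepsilon^2/(2a))\bigr),
\]
which is $o(1)$ once $d > Ca^2/\varepsilon^2$ for an absolute constant $C$. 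Fix such a $d$; the expansion event $\mathcal{E}_1$ then holds with probability at least $9/10$, say.

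Since the maximum degree of $H_0$ is not bounded by a constant, I would use an overshoot trick. Each $\deg_{H_0}(v) \sim \mathrm{Bin}(N-1,d/N)$, and a Chernoff estimate shows that for $b$ a sufficiently large multiple of $d$ one has $\Pr[\deg_{H_0}(v)>b]\le 1/10$. Fixing such $b$, the expected number of ``bad'' vertices of degree exceeding $b$ is at most $N/10$, so by Markov's inequality the event $\mathcal{E}_2 = \{$at most $N/2 = an$ vertices of $H_0$ have degree $> b\}$ has probability at least $4/5$. Thus $\mathcal{E}_1 \cap \mathcal{E}_2$ has positive probability; on this event at least $an$ vertices of $H_0$ have degree at most $b$, so I pick any such set $U$ of size $an$ and set $H := H_0[U]$. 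Property~(2) then holds by construction, while property~(1) follows because any $H_0$-edge between two subsets of $U$ survives in $H$.

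Both main calculations --- the union bound over pairs $(S,T)$ and the Chernoff tail for the binomial degree --- are entirely routine, so I do not anticipate any serious technical obstacle. The only point that requires a little care is the overshoot: starting from $N\approx 2an$ rather than exactly $an$ vertices, so that after discarding the atypical high-degree vertices at least $an$ vertices of degree at most $b$ remain for us to form $H$.
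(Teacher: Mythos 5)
Your proposal is correct and follows essentially the same approach as the paper: sample a binomial random graph on roughly $2an$ vertices with constant expected degree, verify the expansion property via a union bound, and then pass to an $an$-vertex induced subgraph of bounded degree. The only minor difference is in the trimming step: the paper bounds $|E(G)|$ with a Chernoff estimate and then iteratively deletes maximum-degree vertices, obtaining the degree bound by an edge-count contradiction, whereas you bound the number of atypically-high-degree vertices directly via a per-vertex Chernoff estimate and Markov's inequality; both routes are valid and comparably short.
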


\begin{proof}
Fix $k\geq 1$ and $\varepsilon>0$.  Let 
\begin{equation}
  \label{eq:a_0_def}
  a_0=2+{4\over\epsilon(k+1)},
\end{equation}
and suppose $a\geq a_0$ is given.  Let
\begin{equation}
  \label{eq:c_def}
  c={4a\over\epsilon^2}
\end{equation}
and
\begin{equation}
  \label{eq:b_def}
  b=4ac.
\end{equation}
Let $n$ be sufficiently large and $G=G(2an,p)$ be the binomial random graph with $p=c/n$.  By
Chernoff's inequality, with high probability we have
$|E(G)|<(4a^2c)n$.  Moreover, with high probability $G$ satisfies~\eqref{item-1} (with $H=G$) by the following reason: Let $X$ be
the number of pairs of disjoint subsets of $V(G)$ of size $\eps n$
with no edges between them.  Then, recalling~\eqref{eq:c_def} and using Markov's inequality, we have
\begin{equation*}
  \PP[X \geq 1] \leq \EE[X]\leq\binom{2an}{\eps n}^2 \left(1-\frac{c}{n}\right)^{(\eps n)^2}
                < 2^{4an}\cdot e^{-c\eps^2 n} = o(1).
\end{equation*}
Thus, we can fix a graph $G$ satisfying these properties.

Now let $H$ be a subgraph of $G$ obtained by iteratively removing a
vertex of maximum degree until exactly $an$ vertices remain.  Then
$\Delta(H)\leq b$, as otherwise we would have deleted more than
$b\cdot an> |E(G)|$ edges from $G$ during the iteration, which, in
view of~\eqref{eq:b_def}, is a contradiction.  Moreover, as $H$ is an
induced subgraph of~$G$, \eqref{item-1} is maintained. This completes the proof of the lemma.
\end{proof}

We now show that any graph satisfying the hypothesis of Lemma~\ref{lem:Hexists} and property \eqref{item-1} also satisfies an additional property. In what follows, $a_0$ will be as defined in Lemma~\ref{lem:Hexists}.

\begin{algorithm}[h]
  \label{alg:cA}
  \caption{}
  \SetKwInOut{Input}{Input}
  \SetKwInOut{Output}{Output}
  \SetKw{STOP}{STOP}

  \Input{a graph~$H$ with~$v(H)=an$ satisfying~\eqref{item-1}
    and sets $A_i\subset V(H)$ ($1\leq i\leq k+1$)
    with~$A_i\cap A_j=\emptyset$ for all~$i\neq j$
    and~$|A_i|\geq\eps an$ for all~$i$.}
  \Output{a path $P_n=(x_1,\dots,x_n)$ in~$H$ with $x_i \in A_j$ for
    all~$i$, where~$j\equiv i\pmod{k+1}$.} 

  \ForEach {$1\leq i\leq k+1$} {
    $U_i\ot A_i$;\quad $D_i\ot\emptyset$
  }
  \While {$|D_i|\leq|A_i|/2$ for all~$i$} {
    pick~$x_1\in U_1$ and let~$P=(x_1)$;\quad $r\ot 1$;\quad $U_1\ot U_1\setminus\{x_1\}$\label{alg:U1_empty} \\
    \While {$1\leq|P|<n$} {
      \tcp{$P=(x_1,\dots,x_r)$ with~$r\geq1$}
      \eIf {$\exists u\in U_{r+1}$ with $\{x_r,u\}\in E(H)$} {
        \label{alg:if}
        $x_{r+1}\ot u$;\quad $U_{r+1}\ot U_{r+1}\setminus\{u\}$\\
        $P\ot(x_1,\dots,x_r,x_{r+1})$;\quad $r\ot r+1$
      } {
        $D_r\ot D_r\cup\{x_r\}$\label{alg:declare_d}\\
        $P\ot(x_1,\dots,x_{r-1})$;\quad $r\ot r-1$
      }
    }
    \If {$|P|=n$} {
      \Return $P$\quad\tcp{path has been found}
      \label{alg:returns_P}
    }
  }
  \STOP\ with failure\quad\tcp{this will not happen}  
  \label{alg:deadend}
\end{algorithm}

\begin{lem}\label{lem:H2}
Let $H$ be a graph with $v(H) = an$, for $a \ge a_0$, with property \eqref{item-1}. Then, for any family of pairwise disjoint sets
    $A_1,\dots,A_{k+1}\subseteq V(H)$ each of size at least $\eps an$, there is a path $P_n=(x_1,\dots,x_n)$ in~$H$ with
    $x_i \in A_j$ for all~$i$, where $j\equiv i\pmod{k+1}$. 
\end{lem}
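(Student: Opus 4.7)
The plan is to run Algorithm~\ref{alg:cA} on the input and argue that it must terminate by returning a path on line~\ref{alg:returns_P} rather than reaching STOP on line~\ref{alg:deadend}. The algorithm performs a DFS-style search, maintaining monotonically decreasing sets $U_i \subseteq A_i$ of still-available candidates for position~$r$ of the path (where $r \equiv i \pmod{k+1}$), together with sets $D_i \subseteq A_i$ of vertices on which attempts to extend have failed. Termination is immediate since each $|D_i|$ only grows and is bounded by $|A_i|$, so the whole content of the proof is in ruling out the STOP branch.

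Suppose for contradiction that the algorithm reaches STOP. Consider the first moment during the execution at which some $|D_j|$ exceeds $|A_j|/2$, and call this index $j^*$: at that instant, a vertex $x^*$ at position~$r$ of the current path was just declared dead (so $r \equiv j^* \pmod{k+1}$), and the path has just been truncated to length $r-1 \leq n-2$. Since $j^*$ is the first index to exceed the threshold, $|D_{j^*+1}| \leq |A_{j^*+1}|/2$ (indices are cyclic modulo $k+1$); and since at most $\lceil (r-1)/(k+1) \rceil \leq n/(k+1)+1$ vertices of $A_{j^*+1}$ currently sit in the path, one obtains $|U_{j^*+1}| \geq |A_{j^*+1}| - |D_{j^*+1}| - n/(k+1) - 1 \geq \varepsilon a n / 2 - n/(k+1) - 1 \geq \varepsilon n$, where the final inequality uses the choice $a_0 = 2 + 4/(\varepsilon(k+1))$ from~\eqref{eq:a_0_def}. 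Trivially $|D_{j^*}| > |A_{j^*}|/2 \geq \varepsilon a n / 2 \geq \varepsilon n$ as well, since $a \geq a_0 > 2$.

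The crucial observation is that each $U_i$ only shrinks during the execution: vertices are removed from $U_i$ when placed in the path and are never re-added (backtracking merely transfers vertices from the path into~$D_i$). Thus any $x \in D_{j^*}$, declared dead at some earlier time $t_x$, had no neighbor in the then-current $U_{j^*+1}$, which contains the present $U_{j^*+1}$; in particular $x$ has no neighbor in the present $U_{j^*+1}$. Hence $D_{j^*}$ and $U_{j^*+1}$ are disjoint subsets of $V(H)$ of size at least $\varepsilon n$ each with no edges between them, contradicting property~\eqref{item-1}. The main technical hurdle will be precisely this bookkeeping that guarantees $|U_{j^*+1}| \geq \varepsilon n$ at the critical moment: it is exactly to absorb the losses from $D_{j^*+1}$ (up to half of $A_{j^*+1}$) and from the up to $n/(k+1)+1$ path vertices of $A_{j^*+1}$ that $a_0$ is chosen as in~\eqref{eq:a_0_def}.
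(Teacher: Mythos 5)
Your argument is correct and is essentially the paper's proof: you analyse Algorithm~\ref{alg:cA}, take the first moment some $|D_j|$ exceeds $|A_j|/2$, lower-bound $|U_{j^*+1}|$ via the choice of $a_0$, and contradict property~\eqref{item-1} using the fact that the $U_i$ only shrink. The paper adds two tiny explicit checks that you elide---that $U_1\neq\emptyset$ whenever line~\ref{alg:U1_empty} executes, and that termination is best seen from the potential $\sum_i\bigl(|D_i|-|U_i|\bigr)$ (your observation that each $D_i$ only grows does not by itself bound the inner-loop work)---but neither affects the soundness of your approach.
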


To prove this lemma, we analyse a depth first search algorithm, adapting a proof
idea in~\cite[Lemma~4.4]{ben-eliezer12:_Ramsey}.  More specifically,
we run an algorithm (stated formally as Algorithm~\ref{alg:cA}).  Our algorithm
receives as input a graph~$H$ with $v(H)=an$ satisfying
Property \eqref{item-1} and a family of pairwise disjoint sets
$A_1,\dots,A_{k+1}\subseteq V(H)$ with~$|A_i|\geq\eps an$ for all~$i$.
The output of~$\cA$ is a path $P_n=(x_1,\dots,x_n)$ in~$H$ with
$x_i\in A_j$ for all~$i$, where $j\equiv i\pmod{k+1}$.

As it runs, the algorithm builds a path~$P=(x_1,\dots,x_r)$ with
$x_i\in A_j$ for all~$i$ and~$j$ with $j\equiv i\pmod{k+1}$.
Furthermore, it maintains sets~$U_j$ and~$D_j\subset A_j$ for all~$j$,
with the property that $U_j$, $D_j$, and~$V(P)\cap A_j$ form a
partition of~$A_j$ for every~$j$.  The cardinality of the sets~$U_j$ decrease as the
algorithm runs, while the~$D_j$ increase.  As the algorithm runs, we
have~$r=|P|<n$ and it searches for an edge~$\{x_r,u\}\in E(H)$
where~$u$ belongs to the set~$U_{r+1}$ of \textit{unused} vertices
in~$A_{r+1}$.  If such a vertex~$u\in U_{r+1}$ is found, then~$P$ is
made one vertex longer by adding~$u$ to it.  If there is no such
vertex~$u$, then~$x_r$ is declared a \textit{dead end} and it is put
into~$D_r$.  Moreover, the path~$P$ is shortened by one vertex; it
becomes $P=(x_1,\dots,x_{r-1})$.  Our algorithm iterates this
procedure.  If we find a path~$P$ with~$n$
vertices this way, then we are done. 

We now analyse Algorithm~\ref{alg:cA}.

\begin{proof}[Proof of Lemma~\ref{lem:H2}]

We will prove that Algorithm~\ref{alg:cA} returns a
path~$P$ on line~\ref{alg:returns_P} as desired, instead of
terminating with failure on line~\ref{alg:deadend}.  

First recall that $U_i$, $D_i$, and~$V(P)\cap A_i$ form a
partition of~$A_i$ for every~$i$.
Since the path $P$ is always empty on line~\ref{alg:U1_empty}, at this point we have 
$|U_1|\geq|A_1|-|D_1|\geq|A_1|/2>0$. Then, line~\ref{alg:U1_empty} is always executed succesfully.

Suppose now that~$\cA$ stops with failure on line~\ref{alg:deadend}.
Then, for some~$i$, say~$i=r$, the set~$D_i=D_r$ became larger
than~$|A_r|/2\geq\epsilon an/2\geq\epsilon n$.  Furthermore, we
have~$|P|<n$ and~$|D_{r+1}|\leq|A_{r+1}|/2$ (indices modulo~$k+1$) and
hence, 
\begin{equation*}
	|U_{r+1}|\geq|A_{r+1}|-|D_{r+1}|-|V(P)\cap A_{r+1}|
	\geq{1\over2}|A_{r+1}|-\llceil n\over k+1\rrceil
	\geq{1\over2}\epsilon an-{2n\over k+1}
	>\epsilon n.
\end{equation*}
Applying Property \eqref{item-1} of Lemma~\ref{lem:Hexists} to the
pair~$(D_r,U_{r+1})$, we see that there is an edge~$\{x,u\}\in E(H)$
with~$x\in D_r$ and~$u\in U_{r+1}$.  Consider the moment in which~$x$
was put into~$D_r$.  This happened on line~\ref{alg:declare_d},
when~$P$ had~$x$ as its foremost vertex and~$\cA$ was trying to
extend~$P$ further into~$U_{r+1}$.  At this point, because of the
edge~$\{x,u\}\in E(H)$, we must have had~$u\notin U_{r+1}$ (see
line~\ref{alg:if}).  Since the set~$U_{r+1}$ decreases as~$\cA$ runs,
this is a contradiction and hence~$\cA$ does not terminate on
line~\ref{alg:deadend}.

Algorithm~\ref{alg:cA} terminates
as~$\sum_{1\leq i\leq k+1}\big(|D_i|-|U_i|\big)$ increases as it runs.
We conclude that it returns a suitable path~$P$ as claimed.
\end{proof}

We are now ready to complete the proof of Theorem~\ref{thm:main}.

\begin{proof}[Proof of Theorem~\ref{thm:main}]
  Fix $k\geq 1$ and let~$\eps=1/3(k+1)$.  Let~$a_0$ be the constant
  given by an application of Lemma~\ref{lem:Hexists} with
  parameters~$k$ and~$\eps$.  Set $a=\max\{6k,a_0\}$ and let~$b$ be
  given by Lemma~\ref{lem:Hexists} for this choice of~$a$.  Moreover,
  let~$H$ be a graph with $|V(H)|=an$ and $\Delta(H)\leq b$ be as in
  Lemma~\ref{lem:Hexists}.  Finally, put $t=(64k)^{2k}$ and $s=2k$.

  Let $H^k_t$ be a complete-t-blow-up of $H^k$, as in
  Definition~\ref{def:blow}, and let
  $\chi\colon E(H^k_t)\to\{\red,\blue\}$ be an edge-colouring
  of~$H^k_t$.  We shall show that $H^k_t$ contains a monochromatic
  copy of~$P_n^k$ under~$\chi$.  By the definition of $H^k_t$, any
  cluster $C(v)$ contains a monochromatic copy $B(v)$ of $K_t$.
  Without loss of generality, the set
  $W:=\{v \in V(H)\colon B(v) \text{ is blue}\}$ has cardinality at
  least~$v(H)/2$.  Let $F:= H[W]$ be the subgraph of $H$ induced by
  $W$, and let $F'$ be the subgraph of $F_t^k\subset H^k_t$ induced by
  $\bigcup_{w \in W}V(B(w))$.

  Given the above colouring $\chi$, we define a colouring $\chi'$ of
  $F^k$ as follows.  An edge $\{u,v\}\in E(F^k)$ is coloured
  \emph{blue} if the bipartite subgraph~$F'[V(B(u)),V(B(v))]$
  of~$F'$ naturally induced by the sets~$V(B(u))$ and~$V(B(v))$
  contains a blue~$K_{s,s}$.  Otherwise $\{u,v\}$ is coloured
  \emph{red}.

  \begin{claim}\label{lem:auxiliary}
    Any $2$-colouring of $E(F^k)$ has either a blue $P_n$ or a red~$P_n^k$.
  \end{claim}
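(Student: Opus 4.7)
The plan is to reduce to Pokrovskiy's Theorem~\ref{thm:alexey} by extending $\chi'$ from $F^k$ to a red/blue colouring $\widetilde{\chi}$ of the complete graph on $V(F)$, declaring every non-edge of $F^k$ to be red. Apply Theorem~\ref{thm:alexey} with parameter $k$ to this $\widetilde{\chi}$-coloured complete graph: the vertex set $V(F)$ is partitioned into $k$ vertex-disjoint blue paths $Q_1,\dots,Q_k$ together with a red balanced complete $(k+1)$-partite graph $R$ with parts $A_1,\dots,A_{k+1}$. Since a blue edge of $\widetilde{\chi}$ cannot be a non-edge of $F^k$, each $Q_i$ is automatically a path in $F^k$, blue under $\chi'$.

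If some $|Q_i|\ge n$, then $Q_i$ contains a blue $P_n$ in $F^k$ and we are done. Otherwise the blue paths cover at most $k(n-1)<kn$ vertices, so $R$ covers at least $v(F)-kn\ge an/2-kn$ vertices. Since $a\ge 6k$, this is at least $2kn$, and by balance each class satisfies
\begin{equation*}
|A_i|\ge \frac{2kn}{k+1} = \eps an,
\end{equation*}
where $\eps=1/(3(k+1))$. Now I would apply Lemma~\ref{lem:H2} to the induced subgraph $F$ (which inherits property~\eqref{item-1} from $H$ verbatim, since $F=H[W]$) with these classes $A_1,\dots,A_{k+1}$, yielding a path $P=(x_1,\dots,x_n)$ in $F$ with $x_i\in A_j$ whenever $j\equiv i\pmod{k+1}$.

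It remains to show that the copy of $P_n^k$ sitting on the vertices $x_1,\dots,x_n$ is monochromatically red under $\chi'$. For any $1\le i<i'\le n$ with $i'-i\le k$, the subpath $x_i,\dots,x_{i'}$ shows that $x_i$ and $x_{i'}$ are at distance at most $k$ in $F$, so $\{x_i,x_{i'}\}\in E(F^k)$. Moreover, since $0<i'-i\le k<k+1$, we have $i\not\equiv i'\pmod{k+1}$, so $x_i$ and $x_{i'}$ lie in distinct parts of $R$; hence $\{x_i,x_{i'}\}$ is red under $\widetilde{\chi}$, and being a genuine edge of $F^k$, it is red under $\chi'$. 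This produces the desired red $P_n^k$ in $F^k$.

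The main obstacle, in my view, is the bookkeeping with constants: one must choose $a$ large enough (here $a\ge 6k$) so that after deleting the at most $kn$ vertices used up by the $k$ blue paths, each of the $k+1$ balanced red classes is still of size at least $\eps an$, matching the threshold required by Lemma~\ref{lem:H2}. A secondary point to verify is that Lemma~\ref{lem:H2} is applicable to $F$ even though $v(F)\ge an/2$ rather than exactly $an$; this is fine because the algorithm's analysis only uses property~\eqref{item-1} of the host graph and lower bounds on the $|A_i|$'s, both of which are available.
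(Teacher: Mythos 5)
Your argument follows the paper's proof essentially line for line: complete the $\chi'$-coloured $F^k$ to a red/blue coloured $K_{v(F)}$ by painting non-edges red, apply Theorem~\ref{thm:alexey}, extract the blue $P_n$ or else feed the parts of the red complete $(k+1)$-partite graph into Lemma~\ref{lem:H2}, and observe that any $P_n$ traversing the parts cyclically gives a red $P_n^k$ because vertices at distance at most $k$ along it are in distinct parts. Your final verification that the resulting $P_n^k$ is genuinely red under $\chi'$ (as opposed to merely under $\widetilde\chi$) is carefully spelled out, which is a nice touch.

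There is, however, one arithmetic slip. You weaken $v(R)\ge\bigl(\tfrac a2-k\bigr)n$ to $v(R)\ge 2kn$ before dividing by $k+1$, and then assert $\tfrac{2kn}{k+1}=\eps an$. That identity requires $a=6k$, but the paper sets $a=\max\{6k,a_0\}$, and with $\eps=\tfrac1{3(k+1)}$ one has $a_0=2+\tfrac{4}{\eps(k+1)}=14$, so for $k=2$ (indeed for any $k$ with $6k<a_0$) one has $a>6k$ and $\tfrac{2kn}{k+1}<\eps an$; your chain then fails to deliver the hypothesis $|A_i|\ge\eps an$ of Lemma~\ref{lem:H2}. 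The fix is not to throw away information prematurely: keep $|A_i|\ge\tfrac{1}{k+1}\bigl(\tfrac a2-k\bigr)n$ and use $k\le a/6$ to get $\tfrac a2-k\ge\tfrac a3$, whence $|A_i|\ge\tfrac{an}{3(k+1)}=\eps an$, exactly as in the paper. A second, minor point: Lemma~\ref{lem:H2} as stated is for a host on exactly $an$ vertices. Rather than arguing that $F$ inherits the hypotheses (which works but needs the observation that the lemma's proof only uses $|A_i|\ge\eps an$ and property~\eqref{item-1}, not the exact order of the host), it is cleaner to apply Lemma~\ref{lem:H2} to $H$ itself with the sets $A_i\subset V(F)\subset V(H)$ and note that the resulting path, having all its vertices in $V(F)$, lies in the induced subgraph $F=H[W]$.
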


  \begin{proof}
    We apply Theorem~\ref{thm:alexey} to $F^k$, where if an edge is
    not present in~$F^k$, then we consider it to be in the red colour
    class.  If $F^k$ contains a blue copy of $P_n$, then we are done.
    Hence we may assume $F^k$ contains a balanced, complete
    $(k+1)$-partite graph $K$ with parts $A_1,\dots,A_{k+1}$ on at
    least $v(F^k)-kn\geq an/2-kn$ vertices, with no blue edges between
    any two parts.  As~$a\geq6k$, each one of these parts has size at
    least
    \begin{equation}
      \label{eq:A_i_sizes}
      {1\over k+1}\({1\over2}a-k\)n\geq\epsilon an.
    \end{equation}
    By Property~\ref{lem:H2} of Lemma~\ref{lem:Hexists} applied to the
    collection of sets of vertices $A_1,\dots,A_{k+1}$ of $F\subset H$
    (specifically $F$ and not~$F^k$), we see that $F[V(K)]$ contains a
    path with $n$ vertices such that any consecutive $k+1$ vertices
    are in distinct parts of~$K$.  Therefore $F^k[V(K)]$ contains a
    copy of $P_n^k$ in which every pair of adjacent vertices are in
    distinct parts of~$K$. By definition of~$K$, such a copy is red.
  \end{proof}

  By Claim~\ref{lem:auxiliary}, $F^k$ contains a blue copy of $P_n$ or
  a red copy of $P_n^k$ under the edge-colouring $\chi'$.  Thus, we can
  split our proof into these two cases.

  (\textit{Case~1})\quad First suppose $F^k$ contains a blue copy
  $(x_1,\dots,x_n)$ of~$P_n$.
  Then, for every $1 \le i \le n-1$, the bipartite graph
  $F'[V(B(x_i)),V(B(x_{i+1}))]$ contains a blue copy of $K_{s,s}$, with,
  say, vertex classes $X_i \subseteq V(B(x_i))$ and
  $Y_{i+1}\subseteq V(B(x_{i+1}))$.  As $|X_i|=|Y_i|=s=2k$ for all
  $2\leq i\leq n-1$, we can find sets $X'_i \subseteq X_i$ and
  $Y'_i\subseteq Y_i$ such that $|X'_i| = |Y'_i| = k$ and
  $X'_i \cap Y_i' = \emptyset$ for all $2 \le j \le n-1$.
  Let~$X_1'=X_1$ and~$Y_n'=Y_n$.

  We now show that the set
  $U := \bigcup_{i=1}^{n-1}X'_i \cup \bigcup_{i=2}^{n}Y'_i$ provides
  us with a blue copy of~$P_{2kn}^k$ in~$F'\subset H_t^k$.  Note first
  that~$|U|=2k+2k(n-2)+2k=2kn$.  Let $u_1,\dots, u_{2kn}$ be an
  ordering of $U$ such that, for each~$i$, every vertex in $X_i'$
  comes before any vertex in $Y_{i+1}'$ and after every vertex in
  $Y_i'$.  By the definition of the sets $X_i'$ and $Y_i'$ and the
  construction of $F'\subset F_t^k\subset H_t^k$, each vertex~$u_j$ is
  adjacent in blue to $\{u_{j'}\in U\: 1 \le |j-j'| \le k \}$.  Thus,
  $U$~contains a blue copy of~$P_{2nk}^k$, as claimed.

  (\textit{Case~2})\quad Now suppose $F^k$ contains a red copy~$P$
  of~$P^k_n$. That is, $F^k$ contains a set of vertices
  $\{x_1,\dots,x_n\}$ such that $x_i$ is adjacent in red to all $x_j$
  with $1 \le |j-i| \le k$.  We shall show that, for each
  $1 \le i \le n$, we can pick a vertex $y_i\in V(B(x_i))$ so that
  $y_1,\dots, y_n$ define a red copy of~$P_n^k$
  in~$F'\subset F_t^k\subset H_t^k$.  We do this by
  applying the local lemma~\cite[p.~616]{erdos75:_probl} (a greedy strategy also works).

  We have to show that it is possible to pick the~$y_i$
  ($1\leq i\leq n$) in such a way that~$\{y_i,y_j\}$ is a red edge
  in~$F'$ for every~$i$ and~$j$ with $1 \le |i-j| \le k$.  Let us
  choose~$y_i\in V(B(x_i))$ ($1\leq i\leq n$) uniformly and
  independently at random.  Let~$e=\{x_i,x_j\}$ be an edge
  in~$P\subset F^k$.  We know that~$e$ is red.  Let~$A_e$ be the event
  that~$\{y_i,y_j\}$ is a \textit{blue} edge in~$F'$.  Since the
  edge~$e$ is red, we know that the bipartite
  graph~$F'[V(B(x_i)),V(B(x_j))]$ contains no blue~$K_{s,s}$.
  Theorem~\ref{thm:kst} then tells us that~$\PP[A_e]\leq4t^{-1/s}$.

  The events~$A_e$ are not independent, but we can define a dependency
  graph~$D$ for the collection of events~$A_e$ ($e\in E(P)$) by adding
  an edge between~$A_e$ and~$A_f$ if and only
  if~$e\cap f\neq\emptyset$.  Then~$\Delta(D)\leq4k$.  Given that 
  \begin{equation}
    \label{eq:4epd}
    4\Delta\PP[A_e]\leq64kt^{-1/s}=1
  \end{equation}
  for all~$e$, the Local Lemma tells us
  that~$\PP\big[\bigcap_{e\in E(P)}\bar A_e\big]>0$, and hence a
  simultaneous choice of the~$y_i$ ($1\leq i\leq n$) as required is
  possible.  This completes the proof of Theorem~\ref{thm:main}.
  \end{proof}

Throughout our proof we have used probabilistic methods to show the existence of $G$. We now briefly discuss how our proof could be made constructive.
First observe that one could give an explicit construction of~$H$. For instance, it suffices to take
for~$H$ a suitable $(n,d,\lambda)$-graph as in Alon and
Chung~\cite{AlCh88}, namely, it is enough to have~$\lambda=O(\sqrt d)$
and~$d$ large enough with respect to~$k$ and~$1/\epsilon$.

\section{Open questions}
\label{sec:open}

We make no attempts to optimise the constant given by our argument, so the following question is of interest.

\begin{ques}
For any integer $k\geq 2$, what is $\limsup\limits_{n\longrightarrow\infty}(\sr(P_n^k)/n)$?
\end{ques}

It is also interesting to consider what happens when more than two colours are at play. For $q \in \NN$, let $\sr_q(H)$ denote the \emph{$q$-colour size-Ramsey number} of $H$; the smallest number of edges in a graph that is $q$-Ramsey for $H$.

\begin{conj}
For any $q,k \in \NN$ we have $\sr_q(P_n^k)=O(n)$.
\end{conj}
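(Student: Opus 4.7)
The plan is to prove the conjecture by induction on $q$, the base case $q = 2$ being Theorem~\ref{thm:main}. Fix $q \geq 3$ and assume the inductive hypothesis $\sr_{q-1}(P_n^k) = O(n)$. Take $G = H_T^k$, where $H$ is a pseudo-random bounded-degree graph on $an$ vertices (as in Lemma~\ref{lem:Hexists}, with $a = a(q, k)$ large) and $T = R_q(K_r)$ with $r = r(q, k)$ large. By Fact~\ref{fact:linear}, $|E(G)| = O(n)$.

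Given a $q$-colouring $\chi$ of $E(G)$, Ramsey's theorem yields a monochromatic copy $B(v)$ of $K_r$ in each cluster $C(v)$, and pigeonhole produces $W \subseteq V(H)$ with $|W| \geq v(H)/q$ on which all the $B(v)$ are of the same colour, say colour~$1$; set $F := H[W]$. I would then run the two-colour proof of Theorem~\ref{thm:main} on the auxiliary $2$-colouring ``colour~$1$ versus the rest''. Its Case~1 immediately produces a monochromatic $P_n^k$ in colour~$1$ and we are done. Otherwise, Case~2 produces (via Theorem~\ref{thm:alexey} and Lemma~\ref{lem:H2}) a path $(x_1, \dots, x_N)$ in $F$, for $N$ large, such that for every $|i - j| \leq k$ the bipartite graph between $B(x_i)$ and $B(x_j)$ in $G$ contains no colour-$1$ copy of $K_{s, s}$; by Theorem~\ref{thm:kst}, its colour-$1$ density is then at most $4r^{-1/s}$, arbitrarily small for $r$ large.

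In place of Case~2's final step---which in the two-colour proof lifts to a ``not colour~$1$'' copy of $P_n^k$, monochromatic only when $q = 2$---I would apply the inductive hypothesis. Using the Lov\'asz Local Lemma together with the colour-$1$ density bound, one can choose sub-clusters $C'(x_i) \subseteq C(x_i) \setminus B(x_i)$ so that no cross-cluster edge in $G[\bigcup_i C'(x_i)]$ has colour~$1$. Arrange, via a strengthened form of Lemma~\ref{lem:Hexists} that produces many pseudo-random induced subgraphs of $H$, that the $x_i$ are the vertex set of a pseudo-random bounded-degree graph $H' \subseteq F$; then $G[\bigcup_i C'(x_i)]$ embeds the inductive $(q - 1)$-Ramsey graph $(H')_{T'}^k$ (with $T' = T^{(q-1)}$ the inductive cluster size). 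The inductive hypothesis applied to this substructure would yield a monochromatic $P_n^k$ in some colour $c \in \{2, \dots, q\}$, completing the induction.

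The main obstacle is the interplay between the within-subcluster structure and the inductive hypothesis: edges inside each $C'(x_i)$ may still carry colour~$1$, so the induced colouring is $q$-coloured rather than $(q-1)$-coloured. Overcoming this calls for a robust/stability version of the inductive hypothesis---showing that the pseudo-random blow-up construction $(H')_{T'}^k$ remains $(q-1)$-Ramsey for $P_n^k$ even after adversarially removing a small fraction of its edges (corresponding to putative within-cluster colour-$1$ edges). Establishing this robustness, together with a careful calibration of the constants $a, T, N, r$ through the induction so that $|E(G)| = O(n)$ is maintained, are the principal technical challenges; the remaining ingredients---a multicolour analogue of Theorem~\ref{thm:kst}, a strengthened Lemma~\ref{lem:Hexists}, and the Local Lemma application---are routine generalisations of what appears in the proof of Theorem~\ref{thm:main}.
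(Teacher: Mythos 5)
This statement is a \emph{conjecture} in the paper, not a theorem: the authors explicitly leave the $q$-colour case open in Section~\ref{sec:open}, so there is no proof in the paper to compare against. What you have written is a proof \emph{plan}, and you correctly flag that it is incomplete; let me point at where the gaps are genuine mathematical obstacles rather than routine bookkeeping. The central one you identify yourself---within each sub-cluster $C'(x_i)$ the induced colouring is still a full $q$-colouring, so the inductive hypothesis does not apply directly---is indeed fatal as stated. The inductive hypothesis asserts only the \emph{existence} of some $O(n)$-edge $(q-1)$-Ramsey graph; it does not say that the particular edge-deleted substructure $G[\bigcup_i C'(x_i)]$ is $(q-1)$-Ramsey, and the ``robust/stability'' strengthening you invoke (Ramsey property surviving adversarial deletion of a small fraction of edges, including potentially all internal clique edges that carry colour $1$) is a substantially stronger statement than Theorem~\ref{thm:main} and would itself require a proof.

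A second problem is the step where you ask, via a ``strengthened Lemma~\ref{lem:Hexists},'' that the path vertices $x_1,\dots,x_N$ produced by Lemma~\ref{lem:H2} span a pseudo-random induced subgraph $H'\subseteq F$. The $x_i$ are the vertices of a \emph{path} in $F$, chosen greedily by a depth-first search to thread through the parts $A_1,\dots,A_{k+1}$; there is no reason the induced subgraph $H[\{x_1,\dots,x_N\}]$ inherits property~\eqref{item-1}, and in general it will not (the DFS has no control over edges among non-consecutive path vertices). Without that pseudo-randomness you cannot re-enter the machinery of Lemma~\ref{lem:H2} at the next level of the induction. In short: the reduction ``colour $1$ versus the rest'' and the use of Theorem~\ref{thm:kst} in Case~2 are sensible first moves, and they do localise the difficulty correctly, but the two ingredients you label ``routine''---a robust version of the blow-up Ramsey property and a pseudo-random induced subgraph on the path vertices---are precisely where the conjecture's difficulty lives, and neither is supplied.
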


It is conceivable that in hypergraphs the size-Ramsey number (defined analogously as for graphs) of tight paths may be linear.
Let $H^{(k)}_n$ denote the tight path of uniformity $k$ on $n$ vertices; that is $V(H^{(k)}_n)=[n]$ and $E(H^{(k)}_n)=\big\{\{1,...,k\},\{2,...,k+1\},...,\{n-k+1,...,n\}\big\}$. The following question appears as Question 2.9 in \cite{DuFlMuRo16+}.

\begin{ques}
For any $k \in \NN$, do we have $\sr(H^{(k)}_n)=O(n)$?
\end{ques}

Finally we note that for fixed $k$, our main result implies the linearity of the size Ramsey number for the grid graphs $G_{k, n}$, the cartesian product of the paths $P_k$ and $P_n$. Indeed our main result implies the linearity of the size Ramsey number for any sequence of graphs with bounded bandwidth. For the $d$-dimensional grid graph $G_n^d$, obtained by taking the cartesian product of $d$ copies of $P_n$, we raise the following question.

\begin{ques}
For any integer $d\geq 2$, is $\sr(G_n^d)=O(n^d)$?
\end{ques}

\section{Acknowledgements}

This research was conducted while the authors were attending the
ATI--HIMR Focused Research Workshop: \textit{Large-scale structures in
  random graphs} at the Alan Turing Institute. 
We would like
to thank the organisers of this workshop and the institute for their
facilitation of productive research environment and provision of a
magical coffee machine.
We are very grateful to David Conlon for suggesting the problem~\cite{Co}.

\bibliographystyle{amsplain}
\bibliography{ramsey}

\end{document}